\def\lb{\lambda}
\def\Op{\mathfrak{Op}}
\def\H{\mathcal{H}}
\def\de{\mathrm{d}}
\def\A{{\mathcal A}}
\def\R{\mathbb R}
\def\C{\mathbb C}
\newtheorem{Theorem}{Theorem}[section]
\newtheorem{proposition}[Theorem]{Proposition}
\newtheorem{Remark}[Theorem]{Remark}
\newtheorem{Definition}[Theorem]{Definition}
\newtheorem{Lemma}[Theorem]{Lemma}
\newtheorem{Corollary}[Theorem]{Corollary}
\title{Explicit Spectral Analysis for Operators Representing the unitary group $\mathbb{U}(d)$ and its Lie algebra $\mathfrak{u}(d)$ through the Metaplectic Representation and Weyl Quantization}
\author{F. Belmonte \,\,\,\, G. de Nittis}
\date{}
\begin{document}

\maketitle
\abstract{In this article we compute and analyze the spectrum of operators defined by the metaplectic representation $\mu$ on the unitary group $\mathbb{U}(d)$ and operators defined by the corresponding induced representation $d\mu$ of the Lie algebra $\mathfrak{u}(d)$. We will show that the point spectrum of both types of operators can be expressed in terms of the eigenvalues of the corresponding matrices. For each $A\in\mathfrak{u}(d)$, we will give conditions to guarantee that $H_A=-i d\mu(A)$ has discrete spectrum. Under these conditions, using a known result in combinatorics, we show that  the multiplicity of the eigenvalues of $H_A$ is (up to some explicit translation and scalar multiplication) a quasi-polynomial of degree $d-1$. Moreover, we show that the counting of eigenvalues function behaves as an Ehrhart polynomial. Using the latter result, we prove Weyl's law for the operators $H_A$.

 }
\section{Introduction}
Let $A=\big(\begin{smallmatrix} 
B & C\\
-C & B
\end{smallmatrix}\big)$, where $B$ and $C$ are $d\times d$ real matrices such that $B^*=-B$ and $C^*=C$. The main purpose of this article is to compute and analyse the spectrum of the following family of operators on $L^2(\R^d)$ with domain $S(\R^d)$ (i.e. the Schwartz space): 

$$
H_A=\frac{1}{2}\sum C_{jk}(-\frac{\partial^2}{\partial x_j\partial 
 x_k}+x_jx_k)+\frac{i}{2}\sum B_{jk}(x_k\frac{\partial}{\partial x_j}-x_j\frac{\partial}{\partial x_k}).
$$
It turns out that, under the identification $\R^{2d}\ni(x,\xi)\mapsto x+i\xi\in\C^d$, the matrices $A$ of the previously described type correspond with matrices belonging to the Lie algebra $\mathfrak{u}(d)$ of anti-Hermitian matrices and the map $A\mapsto H_A$ is a Lie algebra homomorphism. Recall that any anti-hermitian matrix has purely imaginary eigenvalues.  
\begin{Theorem}\label{spec}
Let $is_1,is_2,\cdots is_d$ be the eigenvalues of $A$, $s=(s_1,s_2,\cdots,s_d)$ and $L(s)=\{-\sum s_j n_j\mid n\in\mathbb N^d_0\}$. 
\begin{enumerate}
\item[i)] The point spectrum of $H_A$ is given by
$$
\sigma_p(H_A)=L(s)+\frac{i}{2}\text{tr}(A)
$$
and the spectrum of $H_A$ is $\sigma(H_A)=\overline{\sigma_p(H_A)}$. Let $\{v_j\}_{j=1}^d$ be a basis of $\C^d$ such that $Av_j=is_jv_j$, denote by $\{e_j\}$ the canonical basis of $\C^d$ and $\langle\cdot,\cdot\rangle$ its canonical inner product. For each $(n_1,n_2,\cdots, n_d)\in\mathbb{N}_0^d$, let $q$ be the polynomial on $\C^d$ given by
$$ 
q(z)=\left(\sum \langle v_{1}, e_j\rangle z_j\right)^{n_1}\cdots \left(\sum \langle v_{d}, e_j\rangle z_j\right)^{n_d}.
$$
Then $u=\hat B^*q$ is an eigenvector of $H_A$ with eigenvalue $\lb=-\sum s_j n_j+\frac{i}{2}\text{tr}(A)$, where $\hat B$ is the Bargmann transform. Every eigenvector is an orthogonal sum of finite linear combinations of vectors of the latter form.
\item[ii)] The following statements are equivalent:
\begin{enumerate}
    
    \item[a)] The subgroup of $\R$ generated by the monoid $L(s)$ is of the form $x\mathbb Z$, for some $x\in\R$.
    \item[b)] There is $x\in\R$ and $p_1,\cdots p_d\in\mathbb Z$ such that $s_j=p_j x$.
    \item[c)]  $\sigma_p(H_A)$ is uniformly topologically discrete, i.e. there is $r>0$ such that $(\lambda-r,\lambda+r)\cap (\zeta-r,\zeta+r)=\emptyset$, for every $\lambda,\zeta\in\sigma_p(H_A)$. 

\end{enumerate}
If any of the previous statements holds then $\sigma(H_A)=\sigma_p(H_A)$.
\end{enumerate}
 
\end{Theorem}

The proof of the previous theorem will rely on two important facts: i) $H_A$ is the infinitesimal generator of the one parameter group $U_t=\mu(e^{tA})$, where $\mu$ is the so called metaplectic representation (see chapter 4 in \cite{Fol}). ii) $H_A$ has an explicit Weyl symbol $p_A$ (see equation \eqref{pA}), which Poisson commutes with the classical harmonic oscillator, and therefore (according to theorems 1,2, 3 in \cite{BC}) $H_A$ strongly commutes with the quantum harmonic oscillator. In subsection \ref{Not} we explain in more detail these facts and others that we will need later.

Besides using the metaplectic representation to prove theorem \ref{spec}, we will also compute the spectrum of $\mu(g)$, for every $g\in \mathbb{U}(d)$.  

\begin{Theorem}\label{smu}
Let $\theta_1,\cdots ,\theta_d$ be the eigenvalues of $g\in \mathbb{U}(d)$. Then, the point spectrum of $\mu(g)$ is given by  
$$
\sigma_p(\mu(g))= \det(g)^{-1/2}\cdot\{\overline{\theta}_1^{n_1}\cdots\overline{\theta}_d^{n_d}\mid (n_1,\cdots, n_d)\in\mathbb N_0^d \}.
$$
Let $\{v_j\}_{j=1}^d$ be a basis of $\C^d$ such that $gv_j=\theta_jv_j$, denote by $\{e_j\}$ the canonical basis of $\C^d$ and $\langle\cdot,\cdot\rangle$ its canonical inner product. For each $(n_1,n_2,\cdots, n_d)\in\mathbb{N}_0^d$, let $q$ be the polynomial on $\C^d$ given by
$$ 
q(z)=\left(\sum \langle v_{1}, e_j\rangle z_j\right)^{n_1}\cdots \left(\sum \langle v_{d}, e_j\rangle z_j\right)^{n_d}.
$$
Then $u=\hat B^*q$ is an eigenvector of $\mu(g)$ with eigenvalue $\theta=\det(g)^{-1/2}\cdot\overline{\theta_1}^{n_1}\cdots \overline{\theta_d}^{n_d}$, where $\hat B$ is the Bargmann transform. For any $g\in \mathbb{U}(d)$, $g$ has an irrational rotation eigenvalue if and only if $\sigma(\mu(g))=\mathbb S^1$. Moreover, if $\theta_j=\text{exp}\left(\frac{2\pi ip_j}{q_j}\right)$ with $p_j\in\mathbb Z$ and $q_j\in\mathbb N$, then
$$
\sigma_p(\mu(g))=\sigma(\mu(g))= \det(g)^{-1/2}\cdot\left\{\text{exp}(\frac{2\pi inp}{q})\mid n\in\mathbb N \right\},
$$
where $q$ is the least common multiple of the denominators $q_1,\cdots, q_d$ and $p$ is the greatest common divisor of $\frac{q|p_1|}{q_1},\cdots, \frac{q|p_d|}{q_d}$. In particular, $\sigma(\mu(g))$ is a rotation of a finite subgroup of $\mathbb S^1$.
\end{Theorem}

The proofs of the previous theorems will be given in section \ref{Cspec}.

In section \ref{MyW}, we will provide conditions to guarantee that the spectrum of our operators are discrete and, under such conditions, we will study the corresponding counting of eigenvalues function. From the proof of theorem \ref{smu}, it will become clear that $\mu(g)$ never has discrete spectrum, because either the eigenvalues have infinite multiplicity or they are not isolated. Instead, if any of the conditions a), b) or c) in theorem \ref{spec} holds, $H_A$ has discrete spectrum if and only if all the eigenvalues of $-iA$ have the same sign. Moreover, if the eigenvalues of $-iA$ are $s_j=p_jx$ with $j=1,\cdots, d$, $p_j\in\mathbb Z$ and $x\in\R$, it will become clear that computing the multiplicity of $\lb\in\sigma(H_A)$ is equivalent to counting  how many $(n_1,\cdots,n_d)\in\mathbb N^d_0$ are such that $\lb=-\sum n_j s_j+i\text{tr}(A)/2$. Fortunately, such problems were studied long ago and the following result follows from well known facts on this topic (see theorem $2$ in \cite{Wr}, or  \cite{Bl} for a combinatorial proof of that theorem, without using generating functions).

\begin{proposition}\label{mul}
 Let $A\in\mathfrak u(d)$ and $is_1,is_2,\cdots,is_d$ its eigenvalues. Assume that $s_j=p_j x$ with $p_j\in\mathbb Z$ and $x\in\R-\{0\}$, for each $1\leq j\leq d$. All the eigenvalues of $H_A$ have finite multiplicity if and only if the real numbers $s_1,\cdots,s_d$ have the same sign. In such case, if $m_A(\lb)$ denotes the multiplicity of the eigenvalue $\lb$, then
 
 $$
m_A(\lambda)=\sum_{j=1}^d a_j(|(\lb-\frac{i}{2}\text{tr}(A)) x^{-1}|)\lb^{j-1},
$$
where $a_j(k)$ depends only of residues of $k$ moduli $d!$, for each $k\in\mathbb N$ and $1\leq j\leq d$. 

\end{proposition}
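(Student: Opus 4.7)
The plan is to reduce Proposition \ref{mul} to a classical counting problem for denumerants, and then to invoke the theorem of Wright quoted in the statement.

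First I would reformulate $m_A(\lb)$ as a combinatorial counting function. By Theorem \ref{spec}, every $\lb\in\sigma_p(H_A)$ admits representations of the form $\lb = -\sum_{j=1}^d s_j n_j + \frac{i}{2}\text{tr}(A)$ with $n\in\mathbb{N}_0^d$, and $m_A(\lb)$ counts the number of such representations. Substituting $s_j = p_j x$, the defining equation becomes
$$
\sum_{j=1}^d p_j\, n_j \;=\; k, \qquad k := -\,\frac{\lb - \tfrac{i}{2}\text{tr}(A)}{x}\in\mathbb{Z}.
$$
Hence $m_A(\lb)$ equals the number of ways to write the integer $k$ as a nonnegative $(p_1,\ldots,p_d)$-combination.

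Next I would settle the equivalence concerning finiteness of multiplicities. If $s_1,\ldots,s_d$ share a strict sign, one may absorb the common sign into $x$ and assume $p_1,\ldots,p_d>0$; then $n\mapsto\sum p_j n_j$ is proper on $\mathbb{N}_0^d$, every fiber is finite, and $m_A(\lb)<\infty$ for every $\lb\in\sigma_p(H_A)$. The same argument also explains why the possibility $p_j = 0$ (equivalently $s_j=0$) must be excluded from ``same sign'': the corresponding coordinate $n_j$ would be unrestricted, producing infinite fibers. Conversely, if after relabeling $p_1>0>p_2$, then from any one solution one generates an infinite family through the translation $(n_1,n_2)\mapsto\bigl(n_1+t(-p_2),\,n_2+t p_1\bigr)$ with $t\in\mathbb{N}_0$, which preserves $\sum p_j n_j$ and stays inside $\mathbb{N}_0^d$; hence $m_A(\lb)=\infty$.

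Under the reduction $p_1,\ldots,p_d>0$, the counting function $N(k)=\#\{n\in\mathbb{N}_0^d:\sum p_j n_j=k\}$ is precisely the classical denumerant of $k$ in the parts $(p_1,\ldots,p_d)$. Theorem 2 of \cite{Wr}, or the combinatorial proof given in \cite{Bl}, asserts that $N(k)$ is a quasi-polynomial in $k$ of degree $d-1$ whose coefficients are periodic functions of $k$. Unwinding the substitution $k = |(\lb-\tfrac{i}{2}\text{tr}(A))x^{-1}|$ then produces the stated formula for $m_A(\lb)$.

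The main obstacle is not the spectral side---Theorem \ref{spec} already converts $m_A$ into a counting problem---but rather the careful sign bookkeeping in the reduction to $p_j>0$, together with checking that the period of Wright's quasi-polynomial divides the value $d!$ advertised in the statement. This last point is a purely arithmetic verification on the parts $(p_1,\ldots,p_d)$, but it is the one step where the particular structure coming from $\mathfrak{u}(d)$ (and the scaling freedom in $x$) needs to be matched against the combinatorial output of \cite{Wr}.
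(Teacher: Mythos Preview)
Your approach is essentially the paper's: reduce to counting $n\in\mathbb{N}_0^d$ with $\sum p_j n_j=k$, handle the sign dichotomy via the translation $(n_{j_0},n_{j_1})\mapsto(n_{j_0}+t p_{j_1},\,n_{j_1}-t p_{j_0})$, and then cite Wright's theorem for the quasi-polynomial form. The paper does exactly this.

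There is, however, one step you glide over that the paper treats explicitly. Theorem~\ref{spec} identifies $\sigma_p(H_A)$ only as a \emph{set}; it does not assert that the multiplicity $m_A(\lambda)$ equals the number of representations $\lambda=-\sum s_j n_j+\tfrac{i}{2}\text{tr}(A)$. To get that equality you need more: since $H_A$ strongly commutes with $H_0$, each eigenspace of $H_A$ decomposes as the orthogonal sum over $k$ of the eigenspaces of $H_A|_{\mathcal{H}_k}$, and the vectors $\overline{v_{j_1}}\odot\cdots\odot\overline{v_{j_k}}$ exhibited in equation~\eqref{evec} form a \emph{basis} of $S^k(\mathbb{C}^d)$ (via the isomorphism $T_k$), hence a complete eigenbasis for each $H_A|_{\mathcal{H}_k}$. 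Only then does counting tuples $n$ coincide with counting dimensions of eigenspaces. You should insert this argument before the substitution $s_j=p_jx$; otherwise the reduction to a denumerant is unjustified. Your concern in the last paragraph about matching the period to $d!$ is legitimate, but the paper itself simply defers that point to \cite{Wr}, so your treatment is no less complete than the original on that score.
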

As a consequence of the previous result, if the eigenvalues $-s_1,-s_2,\cdots,-s_d$  of $iA$ are all positive and $s_j=p_j x$ with $p_j\in\mathbb Z$ and $x\in\R-\{0\}$,, we can define the counting of eigenvalues function $N_A(r):=\#\{\lb\in \sigma(H_A)\mid\lambda\leq r\}$. Once again we will use some combinatorial tools to analyze the map $N$. Indeed, using a so called Ehrhart polynomial (i.e. a polynomial of the form given by equation \eqref{Eh}), we will obtain the following result.

\begin{Theorem}[spectral asymptotics]\label{dis}
Let $is_1,is_2,\cdots is_d$ be the eigenvalues of $A\in\mathfrak u(d)$. Assume that $s_j<0$ and that there are $p_j\in\mathbb{Z}$ and $x\in\R$ such that $s_j=p_j x$, for each $1\leq j\leq d$. Choose $x>0$ and let $q$ be the minimal common multiple of $-p_1,-p_2,\cdots -p_d$. Then there is a polynomial $p(k)=\sum_{j=0}^d c_j k^j$ such that 
$$
p\left([(r-\frac{i}{2}\text{tr}(A))(qx)^{-1}]\right)\leq N_A(r)\leq p\left([(r-\frac{i}{2}\text{tr}(A))(qx)^{-1}]+1\right),
$$
where $[t]$ denotes the integer part of $t$, for any $t\in\R$. The inequality in the left hand side becomes an equality whenever $(r-\frac{i}{2}\text{tr}(A))(qx)^{-1}\in\mathbb N_0$. Moreover, if $\mathcal P=\{x\in\R^d\mid x\geq 0, -\sum x_jp_j\leq q \}$, then $c_0=1$, $c_d=|\mathcal P|$ is the volume of $\mathcal P$ and $c_{d-1}=\frac{1}{2} |\partial\mathcal P|$ is  one half of the sum of the $(d-1)$-volume of the faces of $\mathcal P$.  
\end{Theorem}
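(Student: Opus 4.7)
The plan is to recast $N_A(r)$ as a lattice-point count in a dilate of the simplex $\mathcal{P}$ and then apply Ehrhart's theorem. By Theorem~\ref{spec}, each $\lambda \in \sigma(H_A)$ has the form $-\sum_j s_j n_j + \tfrac{i}{2}\text{tr}(A)$ with $n \in \mathbb{N}_0^d$, and by Proposition~\ref{mul} (using the same-sign hypothesis on the $s_j$) each such $\lambda$ arises from only finitely many $n$. Counting eigenvalues with multiplicity therefore gives
$$
N_A(r) \;=\; \#\bigl\{n \in \mathbb{N}_0^d : \textstyle\sum_{j=1}^d (-p_j)\, n_j \leq (r - \tfrac{i}{2}\text{tr}(A))/x\bigr\} \;=\; \#\bigl(\mathbb{N}_0^d \cap t\mathcal{P}\bigr),
$$
where $t := (r - \tfrac{i}{2}\text{tr}(A))/(qx)$ and one uses that $t\mathcal{P} = \{x \geq 0 : \sum(-p_j)x_j \leq qt\}$.

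Next I would verify that $\mathcal{P}$ is a \emph{lattice} simplex: its vertices are the origin and $\bigl(q/(-p_j)\bigr) e_j$ for $j=1,\dots,d$, all of which lie in $\mathbb{Z}^d$ because $-p_j$ divides $q = \text{lcm}(-p_1,\ldots,-p_d)$. Since $\mathcal{P} \subseteq [0,\infty)^d$, the count $\#(\mathbb{N}_0^d \cap k\mathcal{P})$ coincides with the standard Ehrhart function $\#(\mathbb{Z}^d \cap k\mathcal{P})$ for every $k \in \mathbb{N}_0$, and Ehrhart's theorem supplies a polynomial $p(k) = \sum_{j=0}^d c_j k^j$ of degree $d$ agreeing with this count on non-negative integers. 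The identifications $c_d = |\mathcal{P}|$, $c_0 = 1$ (the Euler characteristic of a convex polytope), and $c_{d-1} = \tfrac12|\partial\mathcal{P}|$ are then classical consequences of Ehrhart--Macdonald reciprocity for lattice polytopes.

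For arbitrary real $r$, the inclusion $s\mathcal{P} \subseteq t\mathcal{P}$ whenever $s \leq t$ makes the map $k \mapsto \#(\mathbb{N}_0^d \cap k\mathcal{P})$ monotone; combined with $[t] \leq t \leq [t]+1$ this immediately yields the two-sided bound $p([t]) \leq N_A(r) \leq p([t]+1)$. The left inequality becomes equality exactly when $t \in \mathbb{N}_0$, since then $[t] = t$. The main obstacle is not technical but conceptual: the choice $q = \text{lcm}(-p_1,\ldots,-p_d)$ is precisely what upgrades $\mathcal{P}$ from a merely rational polytope (for which Ehrhart theory would only produce a quasi-polynomial count) to a genuine lattice polytope, so that a single polynomial $p(k)$ can sandwich $N_A(r)$ from both sides. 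Once Ehrhart's theorem is brought to bear, the coefficient identifications and the monotone sandwich are routine.
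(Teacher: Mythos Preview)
Your proposal is correct and follows essentially the same approach as the paper: both reduce $N_A(r)$ to the lattice-point count $\#(\mathbb{Z}^d \cap k\mathcal{P})$ in integer dilates of the simplex $\mathcal{P}$, invoke Ehrhart's theorem to obtain the polynomial $p(k)$, and then sandwich $N_A(r)$ between $p([t])$ and $p([t]+1)$ via monotonicity. You are in fact somewhat more explicit than the paper on two points---why the choice $q=\mathrm{lcm}(-p_1,\ldots,-p_d)$ forces $\mathcal{P}$ to be a genuine lattice polytope (rather than merely rational), and why $\#(\mathbb{N}_0^d\cap k\mathcal{P})=\#(\mathbb{Z}^d\cap k\mathcal{P})$---but these are elaborations of the same argument, not a different route.
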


Since the operator $H_A$ comes from Weyl quantization, we can introduce Planck's constant dependence in our framework and we can study some of the emerging semiclassical problems. In other words, we shall consider the operators $H^\hslash_A:=\Op^\hslash(p_A)$ and analyze what happens when $\hslash\to 0$, where $\Op^\hslash$ denotes the $\hslash$- dependent Weyl quantization (see subsection \ref{WQ} for details). It turns out that $H^\hslash_A$ is unitary equivalent to $\hslash H_A$. Therefore, $\sigma_p(H^\hslash_A)=\hslash(L(s)+i\text{tr}(A)/2)$ and the spectral analysis provided in theorems \ref{spec} and \ref{dis} and proposition \ref{mul} also holds. In particular, the same polynomial behavior occurs for the counting of eigenvalues function $N_A^\hslash$ of $H^\hslash_A$ and this allowed us to prove the following semiclassical result (usually called Weyl's law in the literature). 

\begin{Theorem}[Weyl's law]\label{WL}
Let $is_1,is_2,\cdots is_d$ be the eigenvalues of $A\in\mathfrak u(d)$ and $s=(s_1,\cdots, s_d)$. Assume that $s_j<0$ and that there are $p_j\in\mathbb{Z}$ and $x\in\R$ such that $s_j=p_j x$, for each $1\leq j\leq d$. Also let $H_A^\hslash=\Op^\hslash(p_A)$, $N^\hslash_A(r)=\#\{\lb\in\sigma(H_A^\hslash)\mid \lb\leq r\}$ and $\mathcal E_A(r)=\{(x,\xi)\in\R^{2d}\mid p_A(x,\xi)\leq r\}$. Then
$$
N^\hslash_A(r)=(2\pi\hslash)^{-d}|\mathcal E_A(r)|+\frac{\|s\|}{2}(2\pi)^{-d}
\hslash^{-d+1}\int_{\partial\mathcal E_A(r)}\|\nabla p_A\|^{-1}d\mu_r^{A}+O\left(
\left(\frac{r}{\hslash}\right)^{d-2}\right).
$$
where $\mu_r^{A}$ is the measure corresponding to the canonical volume form on  $\partial\mathcal E_A(r)=\{(x,\xi)\in\R^{2d}\mid p_A(x,\xi)=r\}$.
\end{Theorem}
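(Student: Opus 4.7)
The plan proceeds in three stages. First, using the unitary equivalence $H_A^\hslash\simeq \hslash H_A$ announced before the theorem, I reduce to the already-analyzed operator by writing $N_A^\hslash(r)=N_A(r/\hslash)$. From Theorem~\ref{spec}, an eigenvalue $\lambda=-\sum s_jn_j+\tfrac{i}{2}\text{tr}(A)$ of $H_A$ satisfies $\lambda\leq r/\hslash$ if and only if $n\in\mathbb{N}_0^d$ lies in $t\mathcal{P}$, where $t=(r/\hslash-\tfrac{i}{2}\text{tr}(A))/(qx)$ and $\mathcal{P}$ is the polytope of Theorem~\ref{dis}. Since $\mathcal{P}\subset \R^d_{\geq 0}$, this reads $N_A^\hslash(r)=\#(\mathbb{Z}^d\cap t\mathcal{P})$.

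Next, I invoke the classical real-dilation lattice-point asymptotic for a rational convex polytope,
$$\#(\mathbb{Z}^d\cap t\mathcal{P})=|\mathcal{P}|\,t^d+\tfrac{1}{2}|\partial\mathcal{P}|\,t^{d-1}+O(t^{d-2})\qquad(t\to+\infty).$$
This refines Theorem~\ref{dis}, whose gap $p([t]+1)-p([t])$ is itself of order $t^{d-1}$ and therefore matches the size of the sub-leading Weyl term we are after, so the Ehrhart bounds are not on their own sharp enough. The leading coefficients coincide with $|\mathcal{P}|=c_d$ and $\tfrac{1}{2}|\partial\mathcal{P}|=c_{d-1}$ of Theorem~\ref{dis}. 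Since the deviation of $t$ from $r/(\hslash qx)$ is the constant $-\tfrac{i\text{tr}(A)}{2qx}$, a Taylor expansion of $t^d$ and $t^{d-1}$ around $r/(\hslash qx)$ produces
$$N_A^\hslash(r)=|\mathcal{P}|\Big(\tfrac{r}{\hslash qx}\Big)^d+\frac{r^{d-1}}{(\hslash qx)^{d-1}}\left[\frac{|\partial\mathcal{P}|}{2}-\frac{d\,|\mathcal{P}|\,i\text{tr}(A)}{2qx}\right]+O(\hslash^{-d+2}r^{d-2}).$$

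Finally, I identify these two terms with the geometric objects in the statement. Diagonalizing $A$, one has $p_A=\sum(-s_j)(x_j^2+\xi_j^2)/2$, so $\mathcal{E}_A(r)$ is an ellipsoid of volume $(2\pi)^d r^d/[d!\prod(-s_j)]$; by homogeneity of $p_A$ and the coarea formula,
$$\int_{\partial\mathcal{E}_A(r)}\|\nabla p_A\|^{-1}\,d\mu_r^A=\frac{d}{dr}|\mathcal{E}_A(r)|=\frac{(2\pi)^d r^{d-1}}{(d-1)!\prod(-s_j)}.$$
Substituting $c_d=q^d/[d!\prod(-p_j)]$ and $s_j=p_jx$, the leading term collapses exactly to $(2\pi\hslash)^{-d}|\mathcal{E}_A(r)|$. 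The main obstacle is then the sub-leading identity
$$\frac{|\partial\mathcal{P}|}{2}-\frac{d\,|\mathcal{P}|\,i\text{tr}(A)}{2qx}=\frac{\|s\|(qx)^{d-1}}{2(d-1)!\prod(-s_j)}.$$
Its proof requires computing $|\partial\mathcal{P}|$ face by face: the slanted face $\{\sum(-p_j)x_j=q\}$ has outer unit normal $(-p)/\|p\|$ and $(d-1)$-area $\|p\|\,q^{d-1}/[(d-1)!\prod(-p_j)]$ (via the divergence identity $\sum_{\text{faces}}A_i\hat n_i=0$), while each coordinate face $\{x_j=0\}\cap\mathcal{P}$ has area $q^{d-1}/[(d-1)!\prod_{i\neq j}(-p_i)]$. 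Summing the coordinate-face contributions gives exactly the Taylor correction $d|\mathcal{P}|\,i\text{tr}(A)/(2qx)$, using $i\text{tr}(A)=\sum(-s_j)=x\sum(-p_j)$; these two terms cancel, leaving only the slanted-face term, which via $\|p\|=\|s\|/x$ produces the $\|s\|$ appearing in the Weyl law. This cancellation is the crux of the argument, and makes the statement's specific constant $\|s\|/2$ transparent.
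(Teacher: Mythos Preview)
Your overall strategy coincides with the paper's: reduce via $H_A^\hslash\simeq\hslash H_A$ to lattice-point counting in the dilated simplex $t\mathcal P$, read off the first two Ehrhart coefficients $c_d=|\mathcal P|$ and $c_{d-1}=\tfrac12|\partial\mathcal P|$, split $|\partial\mathcal P|$ into the slanted face and the $d$ coordinate faces, observe that the coordinate-face contribution cancels against the Taylor correction coming from the shift by $-\tfrac{i}{2}\text{tr}(A)$, and convert the surviving slanted-face term and the leading term into the ellipsoidal quantities of the statement via the coarea formula. The paper organizes these volume identities in a separate preparatory Lemma and carries out the passage from the diagonal model $g(x,\xi)=\tfrac12\sum|s_j|(x_j^2+\xi_j^2)$ to $p_A$ via an explicit $U\in\mathbb U(d)$ at the very end, but these are cosmetic differences; the face-by-face computation and the cancellation you highlight are exactly what the paper does.

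The one genuine discrepancy is your treatment of non-integer $t$. You invoke a ``classical real-dilation lattice-point asymptotic''
\[
\#(\mathbb Z^d\cap t\mathcal P)=|\mathcal P|\,t^d+\tfrac12|\partial\mathcal P|\,t^{d-1}+O(t^{d-2}),\qquad t\in\R_{>0},
\]
but this is not a standard result, and it is in fact false already for the $2$-simplex $\{x\geq0,\ x_1+x_2\leq1\}$: there $\#(\mathbb Z^2\cap t\mathcal P)=\binom{[t]+2}{2}$, whose coefficient of $t$ equals $\tfrac12(3-2\{t\})$ and hence oscillates with the fractional part $\{t\}$ rather than being identically $\tfrac32=\tfrac12|\partial\mathcal P|$; the discrepancy from your formula is of order $t$, not $O(1)$. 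Your diagnosis that the Ehrhart sandwich of Theorem~\ref{dis} leaves a gap of size $t^{d-1}$ is correct, but the proposed remedy does not close it. The paper, for its part, does not attempt a real-$t$ asymptotic: it simply declares that Theorem~\ref{dis} allows one to ``assume that $(qx)^{-1}(\tfrac{r}{\hslash}-\tfrac{i}{2}\text{tr}(A))$ belongs to $\mathbb N$'' and then works with the exact Ehrhart polynomial at integer dilations---after which its computation is the one you carry out.
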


Weyl's law is an important topic in operator theory. For a quite complete review on this topic, we recommend \cite{Iv}. It is mentioned there that, for the Laplace operator over a rectangular box (with Dirichlet or Neumann boundary conditions), Weyl's law is obtained by counting the non-negative integer lattice points inside certain inflated ellipsoid. Instead, Ehrhart polynomials are defined counting the non-negative integer lattice points inside an inflated polyhedra. However, during the proof of theorem \ref{WL}, we ended up relating the inflated polyhedra $k\mathcal P$ with the inflated ellipsoid  $\mathcal E(r)$ (with $k=\left[(r-\frac{i}{2}\text{tr}(A))(qx)^{-1}\right]$).

We should also mention that the reminder of the Weyl's law with two terms for large classes of operators is usually of the form $o\left((\frac{r}{\hslash})^{d-1}\right)$ (see for instance, \cite{Iv, DGW}). Of course, the improvement in the remainder of our result is due to the particular type of operators we are considering. 

In relation to the Harmonic Oscillator, we recommend \cite{DGW,Zw}. In \cite{DGW} Weyl's law was obtained for perturbations of the Harmonic operator by an isotropic pseudodifferential operator of order 1. We wonder if the techniques applied there can be adapted to extend their results to perturbations of the operators $H_A$. In \cite{Zw} Weyl's law for the Harmonic oscillator itself is given (theorem 6.3) and also for Schr\"{o}dinger operators with suitable potential (theorem 6.8).

\medskip
 
 \noindent
{\bf Acknowledgements.}
GD's research is supported by the grant \emph{Fondecyt Regular - 1230032}.

\section{Preliminaries}\label{Not}

\subsection{Weyl Quantization and the Metaplectic Representation.}\label{WQ}
The operators that we will study in this article come either from Weyl quantization or the metaplectic representation, so we decided to include a brief introduction of these objects in this subsection, including the features of them that we will need later.
   
Weyl quantization \cite{Wey, Gro} (or Weyl calculus or canonical quantization) is a map meant to transform real functions on the canonical phase space $\R^{2d}$ (classical observables) into selfadjoint operators on $L^2(\R^d)$ (quantum observables) in a physically meaningful manner. Formally, for $\hslash>0$ and $f\in S'(\R^{2d})$, we define  $\Op^\hslash(f):S(\R^d)\to S'(\R^d)$ by

\begin{equation}\label{We}
[\Op^\hslash(f)u](x)=(2\pi\hslash)^{-d}\int_{\R^d}\int_{\R^d}f\left(\frac{x+y}{2},\xi\right)e^{\frac{i}{\hslash}(x-y)\cdot \xi}u(y)\de\xi\de y, 
\end{equation}
where $S'(\R^m)$ denotes the topological dual of the Schwartz space $S(\R^m)$ (i.e. the space of tempered distributions). Notice that the integral above in general makes sense only as a tempered distribution. Indeed, $\Op^\hslash(f)$ has a kernel $K_f$ defined by composing a partial Fourier transform and a linear change of variable over $f$, thus $K_f$ belongs to $S'(\R^{2d})$. Therefore, $\Op^\hslash(f)$ is  defined by $[\Op(f)u](v)= K_f(v\otimes u)$, for every $u,v\in S(\R^d)$. 

There are several approaches to introduce and study Weyl quantization (for instance, see \cite{Zw,Tay, Shu}), but for the purposes of this article we will mainly follow \cite{Fol}, where a more group theoretic approach is used. However, there is a minor difference with our definition. Indeed, if we denote by $\widehat{\Op}^\hslash$ the quantization defined in \cite{Fol}, then $\widehat{\Op}^{2\pi\hslash}=\Op^{\hslash}$. This lead to some superficial changes in some formulas, but whenever this need to be considered we will recall it.

One of the main properties of Weyl quantization is its relation with the so called metaplectic representation. All the results mentioned below concerning this topic can be found in \cite[Chapter 4]{Fol}. Let $Sp(d)$ be the real symplectic group, i.e. the group formed by all the linear and symplectic maps $S:\R^{2d}\to\R^{2d}$. Let $\mathcal U(L^2(\R^d))$ denotes the group of unitary operators over $L^2(\R^d)$. The metaplectic representation is a map $\mu_\hslash: Sp(d)\to\mathcal U(L^2(\R^d))$ such that
\begin{equation}\label{meta}
\Op^\hslash(f\circ S^*)=\mu_\hslash(S)\Op^\hslash(f)\mu_\hslash(S)^{-1},
\end{equation}
for every $S\in Sp(d)$ and $f\in S'(\R^{2d})$. Observe that  \cite[Theorem 2.15]{Fol} concerns the case $\hslash=(2\pi)^{-1}$, and the general case follows using the same proof. 

Once again, we have a minor difference between the way the metaplectic representation is defined in \cite{Fol} and ours. Since $\widehat{\Op}^{2\pi\hslash}=\Op^{\hslash}$, if one denotes by $\hat\mu_\hslash$ the metaplectic representation defined in \cite{Fol}, then $\mu_\hslash=\hat\mu_{2\pi\hslash}$.

Notice that equation \eqref{meta} makes sense because $\mu_\hslash(S)$ is an isomorphism from $S(\R^d)$ into itself, and it extends continuously to an isomorphism from $S'(\R^d)$ into itself  \cite[Proposition 4.27]{Fol}. 
 
Some authors call equation \eqref{meta} the exact Egorov theorem because in a certain sense Egorov theorem asserts a similar claim for arbitrary symplectomorphisms (not necessarily linear)  but only in the semiclassical limit, i.e. when $\hslash\to 0$  \cite[Theorems 11.1, 11.9 \& 11.10]{Zw}.  

Usually, we consider the operators coming from Weyl quantization and the metaplectic representation for the fixed value $\hslash=1$. In this case, we define $\Op=\Op^{1}$ and $\mu=\mu_1$. 

The unitary representations $\{\mu_\hslash\}_{\hslash>0}$ are all unitary equivalent. Indeed,
for each $g\in Sp(d)$, we know that (see the proof of  \cite[Theorem 4.57]{Fol})
\begin{equation}\label{equi-mu}
\mu_\hslash(g)=\mu(D_\hslash)\mu(g)\mu(D_\hslash)^*
\end{equation}
where $D_\hslash(x,\xi)=(\hslash^{-1/2}x,\hslash^{1/2}\xi)$. Furthermore, $\mu(D_\hslash)=\mu_\hslash(D_\hslash)=:\mathcal D_\hslash$ is the $\hslash^{1/2}$- dilation operator, i.e. $[\mathcal D_\hslash\varphi](x)=\hslash^{\frac{d}{4}}\varphi(\hslash^{1/2}x)$ \cite[eq. (4.24)]{Fol}.

Despite its name, $\mu_\hslash$ is not a representation of $Sp(d)$ in the usual sense, because in general, it satisfies the relation $\mu_\hslash(ST)=\pm\mu_\hslash(S)\mu_\hslash(T)$. However, this issue vanishes when $\mu_\hslash$ is restricted to the maximal compact subgroup $\mathbb{O}(2d)\cap Sp(d)$, where $\mathbb{O}(2d)$ is the orthogonal group on $\R^{2d}$. It will become very important later to notice that if we identify $\R^{2d}$ with $\C^d$ via the map $(x,\xi)\mapsto x+i\xi$, then $\mathbb{O}(2d)\cap Sp(d)=\mathbb{U}(d)$ is the complex unitary group on $\C^d$ \cite[Proposition 4.6]{Fol}.

Let us consider the symplectic Lie algebra $\mathfrak{sp}(d)$  corresponding to the Lie group $Sp(d)$. It turns out that $\mathfrak{sp}(d)$ coincides with the space of $2d\times 2d$ real matrices $A$ such that the matrix $ A\mathfrak{J}$ is symmetric, where $\mathfrak{J}$ is the so called symplectic matrix, i.e. $\mathfrak{J}=\big(\begin{smallmatrix} 
0 & I\\
-I & 0
\end{smallmatrix}\big)$ \cite[Proposition 4.2]{Fol}. Such matrix $A$ has associated the quadratic homogeneous polynomial $p_A$ on $\R^{2d}$ given by
\begin{equation}\label{pA}
p_A(w)=-\frac{1}{2}w\cdot A\mathfrak{J}\cdot w,
\end{equation}

From  \cite[Proposition 4.2]{Fol} one knows that if $A$ belongs to the symplectic Lie algebra then $A=\big(\begin{smallmatrix} 
B & C\\
D & -B^*
\end{smallmatrix}\big)$, with $D=D^*$ and $C=C^*$. Therefore 
$$
p_A(x,\xi)=\frac{1}{2}\langle x,Cx\rangle-\langle x,B\xi\rangle-\frac{1}{2}\langle \xi,D\xi\rangle.
$$
Since $\Op^\hslash(x_j\xi_k)=-\frac{i\hslash}{2}(x_j\frac{\partial}{\partial x_k}+x_k\frac{\partial}{\partial x_j})$, we have that
\begin{equation}\label{oppa}
\Op^\hslash(p_A)=\frac{\hslash^2}{2}\sum D_{ij}\frac{\partial^2}{\partial x_j\partial 
 x_k}+i\hslash\sum B_{jk}x_k\frac{\partial}{\partial x_j}+\frac{1}{2} \sum C_{jk}x_jx_k-\frac{\hslash}{2}\text{tr}(B).
\end{equation}

Moreover, if $A\in\mathfrak{u}(d)$, under the identification $\R^{2d}\ni (x,\xi)\mapsto x+i\xi\in \C^d$, we have  $A=\big(\begin{smallmatrix} 
B & C\\
-C & B
\end{smallmatrix}\big)$, with $B^*=-B$ and $C=C^*$ (as in the introduction). Hence, equation \eqref{oppa} becomes 
$$
\Op^\hslash(p_A)=-\frac{\hslash^2}{2}\sum C_{jk}\frac{\partial^2}{\partial x_j\partial 
 x_k}+i\hslash\sum_{j<k} B_{jk}(x_k\frac{\partial}{\partial x_j}-x_j\frac{\partial}{\partial x_k})+\frac{1}{2} \sum C_{jk}x_jx_k.
$$

The operators $\Op^\hslash(p_A)$ can be obtained directly from the metaplectic representation. Indeed, let $d\mu_\hslash$ be the infinitesimal representation associated to $\mu_\hslash$ on $S(\R^d)$, i.e. for $A\in\mathfrak{sp}(d)$ and $\varphi\in S(\R^d)$ define 

$$
d\mu_\hslash(A)\varphi=\frac{d}{dt}[\mu_\hslash(e^{tA})\varphi]|_{t=0}.
$$
Notice that \eqref{equi-mu} implies that 
\begin{equation}\label{equi-dmu}
d\mu_\hslash(A)=\mu(D_\hslash)d\mu(A)\mu(D_\hslash)^*.
\end{equation}
\begin{proposition}\label{hb-H_A}
For each $A\in\mathfrak{sp}(d)$ and $\hslash>0$, we have that $-i\hslash\, d\mu_\hslash(A)=\Op^\hslash(p_A)$. 
\end{proposition}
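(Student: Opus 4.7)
I would prove the identity in two stages: first at $\hslash=1$ by differentiating the exact Egorov theorem~\eqref{meta}, and then bootstrap to arbitrary $\hslash>0$ via the unitary equivalence~\eqref{equi-dmu} together with a scaling identity for Weyl quantization.

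For $\hslash=1$, apply~\eqref{meta} with $S=e^{tA}$ and a test symbol $f=p_B$ for an arbitrary $B\in\mathfrak{sp}(d)$, and differentiate at $t=0$. The right-hand side produces $[d\mu(A),\Op(p_B)]$, while the left-hand side is $\Op$ of the Lie derivative of $p_B$ along the flow generated by $A^{*}$; a short calculation using the defining symmetry $A\mathfrak J=(A\mathfrak J)^{*}$ of $\mathfrak{sp}(d)$ identifies this derivative with (a multiple of) the Poisson bracket $\{p_A,p_B\}$. Since the Moyal star-product truncates at the Poisson bracket on symbols of degree $\le 2$, one simultaneously has $[\Op(p_A),\Op(p_B)]=i\,\Op(\{p_A,p_B\})$. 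Hence $T_A:=-id\mu(A)-\Op(p_A)$ commutes with $\Op(p_B)$ for every $B\in\mathfrak{sp}(d)$ and, by Stone's theorem, with every $\mu(e^{tB})$. Using the semi-irreducibility of the metaplectic representation (scalars on the even and odd subspaces), together with linearity in $A$ and an evaluation of both sides on the Gaussian $e^{-|x|^{2}/2}$ via Folland's explicit formula for $\mu(e^{tA})$ and the formula~\eqref{oppa}, one concludes $T_A=0$.

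For the bootstrap, I would first establish the scaling identity
$$
\Op^{\hslash}(p_A)=\hslash\,\mathcal D_\hslash\,\Op(p_A)\,\mathcal D_\hslash^{*}.
$$
This follows by applying the $\hslash=1$ Egorov identity to $\mathcal D_\hslash\Op(p_A)\mathcal D_\hslash^{*}$, noting that $D_\hslash$ is both symmetric and symplectic so that $\mathcal D_\hslash\Op(p_A)\mathcal D_\hslash^{*}=\Op(p_A\circ D_\hslash)$, computing $p_A\circ D_\hslash=p_{D_\hslash A D_\hslash^{-1}}$, and then verifying block by block via~\eqref{oppa} that $\Op(p_{D_\hslash A D_\hslash^{-1}})=\hslash^{-1}\Op^{\hslash}(p_A)$. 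Combining this with~\eqref{equi-dmu} yields
$$
-i\hslash\,d\mu_\hslash(A)=\hslash\,\mathcal D_\hslash(-id\mu(A))\mathcal D_\hslash^{*}=\hslash\,\mathcal D_\hslash\Op(p_A)\mathcal D_\hslash^{*}=\Op^{\hslash}(p_A).
$$

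The principal difficulty is the careful tracking of $\hslash$-factors in the scaling step: one must verify that the conjugation $A\mapsto D_\hslash A D_\hslash^{-1}$ rescales the three blocks $B,C,D$ of $A$ precisely so that inserting them into~\eqref{oppa} at $\hslash=1$ reproduces $\hslash^{-1}\Op^{\hslash}(p_A)$. The remainder of the argument is formal, and the $\hslash=1$ case is essentially standard material on the derived metaplectic representation as developed in~\cite{Fol} (modulo the convention shift $\widehat{\Op}^{2\pi\hslash}=(2\pi)^{d}\Op^{\hslash}$).
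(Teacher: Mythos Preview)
Your bootstrap is exactly the paper's strategy: use \eqref{equi-dmu} together with \eqref{meta} to turn conjugation by $\mathcal D_\hslash$ into composition of the symbol with $D_\hslash$, then exploit that $p_A$ is a homogeneous quadratic. The only cosmetic difference is that the paper takes $\hslash=(2\pi)^{-1}$ as the base value (so that \cite[Theorem~4.45]{Fol} applies verbatim in Folland's normalization) and carries out the scaling step by the integral definition~\eqref{We} with the change of variables $\xi'=2\pi\hslash\,\xi$, whereas you take $\hslash=1$ as the base and verify the scaling block-by-block via~\eqref{oppa}; both computations yield the same identity $\mathcal D_\hslash\Op(p_A)\mathcal D_\hslash^{*}=\hslash^{-1}\Op^{\hslash}(p_A)$.

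Where you genuinely deviate is in the base case: the paper simply \emph{cites} \cite[Theorem~4.45]{Fol}, while you outline a self-contained derivation via the commutator/Schur argument. Your sketch is essentially sound and indeed standard, but note one small gap: semi-irreducibility only forces $T_A$ to be a pair of scalars, one on $L^2_{\mathrm{even}}$ and one on $L^2_{\mathrm{odd}}$, and evaluating on the Gaussian $e^{-|x|^2/2}$ probes only the even scalar. To close the argument you must also test against an odd vector (e.g.\ $x_1 e^{-|x|^2/2}$), or else invoke the Lie-algebra relation $T_{[A,B]}=0$ together with a single explicit generator whose odd scalar you can compute. Linearity in $A$ alone does not bridge the two sectors. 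Since you end by deferring to \cite{Fol} anyway, the simplest fix is to do what the paper does and just quote Theorem~4.45 there.
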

\begin{proof}
This result for $\hslash=(2\pi)^{-1}$ is proved in \cite[Theorem 4.45]{Fol}. Moreover, \eqref{equi-dmu} and \eqref{meta} imply that
\begin{align*}
-id\mu_\hslash(A)=\mu(D_{\hat\hslash})[-id\mu_{(2\pi)^{-1}}(A)]\mu(D_{\hat\hslash})^*= 2\pi\mu(D_{\hat\hslash})&\Op^{\frac{1}{2\pi}}(p_A)\mu(D_{\hat\hslash})^*\\
&=2\pi\Op^{\frac{1}{2\pi}}(p_A\circ D_{\hat\hslash}^*),
\end{align*}
where $\hat\hslash=2\pi\hslash$. Let us compute the right hand side of the previous identity.
$$
2\pi[\Op^{\frac{1}{2\pi}}(p_A\circ D_\hslash)u](x)=2\pi\int_{\R^d}\int_{\R^d}p_A\left(\frac{\hslash^{-1/2}(x+y)}{2},\hslash^{1/2}\xi\right)e^{2\pi i(x-y)\xi}u(y)\,d\xi dy
$$
Performing the change of variables $\xi'=\hat\hslash\xi$ and using the definition of the polynomial $p_A$, we obtain
\begin{align*}
&2\pi [\Op^{\frac{1}{2\pi}}(p_A\circ D_{\hat\hslash})u](x)\\
=&2\pi(\hat\hslash)^{-d}\int_{\R^d}\int_{\R^d}p_A\left(\frac{\hat\hslash^{-1/2}(x+y)}{2},\hat\hslash^{-1/2}\xi\right)e&^{i\hslash^{-1}(x-y)\xi}u(y)\,d\xi dy=\hslash^{-1}\Op^\hslash(p_A)
\end{align*}

and this finishes the proof.
\end{proof}

Computing explicitly  $\mu_\hslash(g)$ might be difficult in general. One way to do it is, up to a sign, to use the known values of $\mu_{(2\pi)^{-1}}$ on the generators of $Sp(d)$ given by \cite[eq. (4.24), (4.25) \& (4.26)]{Fol}. Another way is to give another representation $\nu$ explicitly, and then show it is unitary equivalent with $\mu_\hslash$. Let us begin with the case $\hslash=(2\pi)^{-1}$ described in \cite{Fol} and references therein. Let $\mathfrak{F}^d$ be the Segal-Bargmann space (some authors call it the Fock space), i.e. $\mathfrak{F}^d$ is the Hilbert space formed by all the holomorphic functions $F:\C^d\to\C$ such that $\int |F(z)|^2 e^{-\pi|z|^2}\, dz<\infty$. Also, let $\hat B:L^2(\R^d)\to\mathfrak{F}^d$ be the Bargmann transform  \cite[Chapter I, Section 6]{Fol}. It turns out that     $\nu(g):=\hat B\mu_{(2\pi)^{-1}}(g)\hat B^*$ admits an explicit expression as an operator with kernel (see \cite[Theorem 4.37]{Fol}). Moreover, \cite[Proposition 4.39]{Fol} asserts that, if $g\in \mathbb{U}(d)$ then 
\begin{equation}\label{nu}
[\nu(g)q](z)=\det(g)^{-1/2}q(g^{-1}z).  
\end{equation}
From \eqref{equi-mu}, it is clear that every $\mu_\hslash$ is unitary equivalent with $\nu$. Indeed, if we define $B_\hslash=\hat B \mathcal D_{(2\pi \hslash)^{-1}}$, then  
$$
 B_{\hslash}\mu_{\hslash}(g) B_{\hslash}^*=\nu(g).
$$

\subsection{The Harmonic Oscillator.}\label{HO}
This  section is meant to summarize some  well-known facts concerning the \emph{harmonic oscillator} and some results from \cite{BC} that we are going to need later.

The classical harmonic oscillator is the classical Hamiltonian defined by $h_0(x,\xi)=\frac{1}{2}(\|x\|^2+\|\xi\|^2)$. It is easy to check that the classical flow of $h_0$ is given by
$$\varphi_t (x,\xi)=\begin{bmatrix}
(\cos t)I&(\sin t)I\\
(-\sin t)I&(\cos t)I
\end{bmatrix}\begin{bmatrix}
x\\
\xi
\end{bmatrix}.$$
Under the identification of $\mathbb{R}^{2d}\ni (x,\xi)\mapsto x+i\xi\in\mathbb{C}^{d}$, the flow admits the representation 
$$
\varphi_t(x+i\xi)=e^{-it}(x+i\xi).
$$

In particular, $\varphi_t$ belongs to the center of $\mathbb{U}(d)$ i.e. it commutes with every element of $\mathbb{U}(d)$, for each $t\in\R$. 

We say that $f\in C^\infty(\R^{2d})$ is a classical constant of motion for $h_0$ if $\{h_0,f\} = 0$, where $\{\cdot,\cdot\}$ denotes the Poisson bracket corresponding to the canonical symplectic structure on $\R^{2d}$. Leibniz's rule and Jacobi identity show that the set $\A$ of all constants of motion is a Poisson subalgebra of $C^\infty(\R^{2d})$. It is easy to show that $f$ belongs to $\A$ if and only if $f\circ\varphi_t=f$, for each $t\in\R$. Since $\varphi_t$ is linear and preserves volume, if $f\in S(\R^{2d})$ then $f\circ\varphi_t\in S(\R^{2d})$ and this composition can be extended to $S'(\R^{2d})$, as in the following definition.

\begin{Definition}\label{TCOM}
For each $T\in S'(\mathbb{R}^{2d})$, we define the distribution $\varphi^*_t T$ by  
$$
(\varphi^*_t T)(f)=T(f\circ \varphi_t),
$$ 
where $f$ is any rapidly decreasing function on $\R^{2d}$. If $\varphi^*_t T=T$, we say that $T$ is tempered constant of motion of the Harmonic Oscillator.
\end{Definition}

The $d$-dimensional quantum harmonic oscillator is the self-adjoint operator $H_0=\frac{1}{2}(-\Delta+\|x\|^{2})$ on $L^2(\mathbb{R}^{d})$. The spectrum of $H_0$ is discrete and equal to $\mathbb N_0+\frac{d}{2}$. Each eigenvalue $k+\frac{d}{2}$ has multiplicity $d_k=\binom{d+k-1}{k}$. We denote by $\mathcal{H}_k$ the corresponding eigenspace and we denote by $P_k$ the corresponding orthogonal projection. It turns out that the $d$-dimensional Hermite functions $\{\phi_\alpha\mid \alpha\in\mathbb{N}^d_0 \}$ form an orthonormal basis of eigenvectors, where
$$
\phi_{\alpha}(x)=(-1)^{|\alpha|}\pi^{-d/4}(2^{|\alpha|}\alpha!)^{-1/2}e^{\frac{\|x\|^{2}}{2}}\left(D^\alpha e^{-\|x\|^{2}}\right),
$$
with $|\alpha|=\alpha_1+\cdots\alpha_d$ and $\alpha !=\alpha_1 !\alpha_2!\cdots\alpha_d !$. Moreover,  $\{\phi_\alpha\mid \alpha\in\mathbb{N}^d_0, |\alpha|=k \}$ is a basis of $\H_k$, for each $k\in\mathbb{N}_0$. Thus,
$$L^2(\mathbb{R}^{d})=\bigoplus_{k\in\mathbb{N}_0}\mathcal{H}_{k}.$$

Notice that $\Op(h_0)=H_0$ i.e. Weyl quantization maps the classical to the  quantum harmonic oscillator. The following result is the combination of  \cite[Theorems 1, 2, 3 and Corollary 3]{BC} and, in a certain sense it asserts that Weyl quantization maps classical to quantum constants of motion of the harmonic oscillator.

\begin{Theorem}\label{PCM}
Let $f$ be a real tempered constant of motion of the classical harmonic oscillator. Then $\Op(f)[S(\R^d)]\subseteq S(\R^d)$. Moreover, $\Op(f)$ with domain $S(\R^d)$ is an essentially selfadjoint operator and we also denote by $\Op(f)$ its closure. Furthermore, $\Op(f)$
 strongly commutes with $H_0$. In particular, if we define $\Op_k(f):=\Op(f)|_{\H_k}$, then 
$$
\sigma_p\left(\Op(f)\right)=\bigcup_{k\in\mathbb N_0} \sigma_p\left(\Op_k(f)\right)
$$
and $\overline{\sigma_p\left(\Op(f)\right)}=\sigma\left(\Op(f)\right)$, where $\sigma$ and $\sigma_p$ denote the spectrum and the point spectrum respectively.
\end{Theorem}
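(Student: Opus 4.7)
My plan is to derive the entire statement from the exact Egorov identity \eqref{meta} applied to the classical flow $\varphi_t$ of $h_0$, together with the spectral decomposition of the quantum oscillator $H_0$ into its finite-dimensional eigenspaces $\H_k$. First I would note that $t\mapsto\varphi_t$ is a one-parameter subgroup of the center of $\mathbb{U}(d)\subset Sp(d)$, so its metaplectic lift $\mu(\varphi_t)$ is a genuine strongly continuous one-parameter group, with no sign ambiguity. Proposition \ref{hb-H_A} applied to the infinitesimal generator $\mathfrak{J}\in\mathfrak{sp}(d)$ of $\varphi_t$, whose associated quadratic symbol is exactly $h_0$, yields $\mu(\varphi_t)=e^{itH_0}$ (up to an irrelevant global phase). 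Since by hypothesis $\varphi_t^*f=f$, the Egorov identity reads
$$
\Op(f)=e^{itH_0}\,\Op(f)\,e^{-itH_0}\qquad (t\in\R),
$$
interpreted as an equality of continuous operators $S(\R^d)\to S'(\R^d)$.

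The second step is to turn this intertwining into a block decomposition. Testing the displayed identity against Hermite functions $\phi_\alpha\in\H_k$ and $\phi_\beta\in\H_\ell$ produces $\langle\Op(f)\phi_\alpha,\phi_\beta\rangle=e^{i(k-\ell)t}\langle\Op(f)\phi_\alpha,\phi_\beta\rangle$ for every real $t$, which forces the matrix element to vanish unless $k=\ell$. Consequently each $\H_k$ is $\Op(f)$-invariant and the restriction $\Op_k(f)$ is a Hermitian operator on the finite-dimensional space $\H_k$, hence admits an orthonormal eigenbasis inside $\H_k\subset S(\R^d)$. Concatenating these bases over $k$ yields a total orthonormal system of eigenvectors of $\Op(f)$ lying in the domain $S(\R^d)$. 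From here, Nelson's analytic-vector theorem delivers essential self-adjointness on $S(\R^d)$; the closures of $\Op(f)$ and $H_0$ strongly commute because they share a joint orthonormal eigenbasis; and the spectral theorem immediately yields $\sigma_p(\Op(f))=\bigcup_k\sigma_p(\Op_k(f))$ together with $\sigma(\Op(f))=\overline{\sigma_p(\Op(f))}$.

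The main obstacle I anticipate is verifying that $\Op(f)$ actually maps $S(\R^d)$ into itself, rather than merely into $S'(\R^d)$. Using the characterization of $S(\R^d)$ as those functions whose Hermite coefficients $(c_\alpha)$ are rapidly decreasing in $|\alpha|$, the block-diagonal structure established above reduces the question to an a priori estimate $\|\Op_k(f)\|\leq C(1+k)^N$ for some $N$ independent of $k$. Such a bound cannot be read off directly from the distributional continuity of Weyl quantization; it requires quantifying the matrix elements $\langle\Op(f)\phi_\alpha,\phi_\beta\rangle$ uniformly on the diagonal $|\alpha|=|\beta|=k$ as $k\to\infty$. This quantitative step, which is the technical heart of \cite{BC}, can be handled by transferring the problem to the Bargmann--Fock realization of subsection \ref{WQ}, where the constant-of-motion hypothesis forces $\nu(f)$ to preserve each finite-dimensional space of homogeneous polynomials of fixed degree, on which its norm is easily controlled by Schwartz seminorms of $f$.
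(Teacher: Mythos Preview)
Your proposal is correct and follows exactly the approach the paper sketches: the paper does not prove Theorem~\ref{PCM} in full but cites \cite{BC}, recording only that the key ingredient is $\mu(\varphi_t)=e^{itH_0}$ together with the Egorov identity \eqref{meta}, which gives $\Op(f)=e^{itH_0}\Op(f)e^{-itH_0}$ and hence the block decomposition over the eigenspaces $\H_k$. Your outline supplies the remaining steps (Nelson's theorem for essential self-adjointness, the polynomial bound on $\|\Op_k(f)\|$ needed for the $S(\R^d)\to S(\R^d)$ mapping) just as one expects the argument in \cite{BC} to proceed; one notational slip is that $\nu(f)$ in your last paragraph should read $B\,\Op(f)\,B^*$, since in the paper $\nu$ is defined only on group elements $g\in\mathbb{U}(d)$.
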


We shall not provide details of the proof of the previous theorem here, but we would like to mention that a key ingredient to do so is to use that 

\begin{equation}\label{mu(h0)}
\mu(\varphi_t)=e^{itH_0}.
\end{equation}

Indeed, this identity and equation \eqref{meta} imply that if $f$ is a tempered constant of motion, then $\Op(f)=e^{itH_0}\Op(f)e^{-itH_0}$, which formally says that $\Op(f)$ commutes with $H_0$.
 
It is straightforward to show that, if $A\in\mathfrak{u}(d)\subset\mathfrak{sp}(d)$, then $p_A$ is a tempered constant of motion. In particular, the spectral decomposition given in theorem \ref{PCM} holds for $H_A=\Op(p_A)$, and this will allow us to compute the spectra of those operators.   

Restricting the metaplectic representation to $\mathbb{U}(d)$ also leads to operators strongly commuting with $H_0$. Indeed, as we mentioned before $\varphi_t$ commutes with every $g\in \mathbb{U}(d)$. Applying the metaplectic representation and \eqref{mu(h0)} we obtain that $\mu(g)e^{itH_0}=e^{itH_0}\mu(g)$ ( corollary 4 in \cite{BC}). Thus,

$$
\langle \mu(g)\phi_\alpha,\phi_\beta\rangle=\langle e^{itH_0}\mu(g)e^{-itH_0}\phi_\alpha,\phi_\beta\rangle=e^{it(|\alpha|-|\beta|)}\langle \mu(g)\phi_\alpha,\phi_\beta\rangle.
$$
Then $\langle \mu(g)\phi_\alpha,\phi_\beta\rangle=0$, unless $|\alpha|=|\beta|$. Hence, each $\H_k$ is invariant by $\mu(g)$. Therefore, if we denote by $\mu_k(g):=\mu(g)|_{\H_k}$, we obtain
\begin{equation}\label{dmu}
 \mu(g)=\bigoplus_{k}\mu_k(g), 
\end{equation}
for every $g\in \mathbb{U}(d)$. Derivating the previous identity and applying Theorem \ref{PCM} we obtain the following result \cite[Corollary 5]{BC}. 

\begin{Corollary}\label{metdecom1}
For each $A\in \mathfrak{u}(d)$, the operator $H_A=\Op(p_A)=-i d\mu(A)$ with domain $S(\R^d)$ is essentially selfadjoint and it strongly commutes with $H_0$. Moreover, the map $\de\mu$ admits the decomposition
$$
\de\mu =\bigoplus_{k}\de\mu_k,
$$
where $\mu_k(g):=\mu(g)|_{\H_k}$, for each $g\in\mathbb{U}(d)$ and $k\in\mathbb N_0$.
\end{Corollary}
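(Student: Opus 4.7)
The plan is to assemble the corollary directly from the preceding material, so the argument is mostly a matter of bookkeeping.

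First, the identity $H_A = \Op(p_A) = -i\,d\mu(A)$ is just Proposition~\ref{hb-H_A} specialized to $\hslash = 1$ (recalling $\Op = \Op^1$ and $\mu = \mu_1$). Since $A \in \mathfrak{u}(d) \subset \mathfrak{sp}(d)$, the polynomial $p_A$ given by \eqref{pA} is a real-valued quadratic, and, as noted in the paragraph preceding the statement, $p_A$ is a tempered constant of motion of the classical harmonic oscillator $h_0$ (this can be seen either from $\{h_0,p_A\}=0$ via a direct computation using $A\mathfrak{J} = \mathfrak{J}A$ for $A\in\mathfrak{u}(d)$, or from the fact that the flow $\varphi_t$ lies in the center of $\mathbb{U}(d)$ and hence commutes with $e^{sA}$). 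Theorem~\ref{PCM} then immediately yields that $\Op(p_A)$ is essentially selfadjoint on $S(\R^d)$ and that its closure strongly commutes with $H_0$.

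For the direct sum decomposition, the idea is to differentiate \eqref{dmu} along the one-parameter subgroup $t\mapsto e^{tA}$. Because $A \in \mathfrak{u}(d)$, we have $e^{tA} \in \mathbb{U}(d)$ for every $t\in\R$, so \eqref{dmu} gives $\mu(e^{tA}) = \bigoplus_k \mu_k(e^{tA})$ with each $\H_k$ invariant. For a Hermite function $\phi_\alpha \in \H_k$, which lies in $S(\R^d)$ and hence in the domain of $d\mu(A)$,
$$
d\mu(A)\phi_\alpha \;=\; \frac{d}{dt}\mu(e^{tA})\phi_\alpha\Big|_{t=0} \;=\; \frac{d}{dt}\mu_k(e^{tA})\phi_\alpha\Big|_{t=0} \;=\; d\mu_k(A)\phi_\alpha,
$$
where the middle derivative is unambiguously defined because $\mu_k(e^{tA})$ is a one-parameter unitary group on the finite-dimensional space $\H_k$. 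In particular $d\mu(A)$ preserves each $\H_k$, and since $\bigoplus_k \H_k$ is dense in $S(\R^d)$ and is a core for the essentially selfadjoint operator $H_A$, the operator equality $d\mu(A) = \bigoplus_k d\mu_k(A)$ follows after taking closures.

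The main (and only mild) subtlety is justifying the passage from the algebraic identity on Hermite functions to an operator-theoretic direct sum decomposition of an unbounded essentially selfadjoint operator. This is handled precisely by the strong commutation with $H_0$ established in the first step: since the $\H_k$ are the spectral subspaces of $H_0$ and $H_A$ strongly commutes with $H_0$, the spectral theorem for strongly commuting selfadjoint operators guarantees that $H_A$ reduces on each $\H_k$, and the restrictions are exactly the finite-dimensional operators $d\mu_k(A)$ identified above.
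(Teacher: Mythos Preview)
Your proposal is correct and follows exactly the route indicated in the paper: the sentence preceding the corollary already says that the result is obtained by ``derivating the previous identity'' (namely \eqref{dmu}) and ``applying Theorem~\ref{PCM}'', and you have simply spelled out those two steps in detail. The only addition is your careful final paragraph justifying the passage from the algebraic identity on Hermite functions to the operator-theoretic direct sum via the strong commutation with $H_0$; this is implicit in the paper's one-line argument but worth making explicit.
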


\section{Computing Spectra}\label{Cspec}

Abusing of the notation, we will denote by $\mu$  the restriction of the metaplectic representation to the complex unitary group $\mathbb{U}(d)$. According to identity \eqref{dmu}, in order to compute the (point) spectrum of $\mu(g)$, it is enough to compute the spectrum of $\mu_k(g)$, for each $k\in\mathbb{N}_0$ and $g\in \mathbb{U}(d)$. Equivalently, we might compute the spectrum of $\nu_k(g)=B\mu_k(g) B^*=\nu(g)|_{\mathfrak{F}^d_k}$, where $\mathfrak F^d_k:=B(\H_k)$.  Besides $\nu_k$, we will need another unitary equivalent representation of $\mu_k$. It is well known that $\mathfrak F^d_k$ is the subspace generated by the monomials of total degree $k$. Let $S^k(\C^d)$ denotes the $k$-th symmetric power of $\C^d$ and $\{e_j\}$ be the canonical basis of $\C^d$. For each  $j_1,\cdots j_k\in\{1,\cdots , d\}$,  let $q_{j_1,\cdots j_k}(z)=z_{j_1}\cdots z_{j_k}$.  It is well know that the map $T_k:S^k(\C^d)\to \mathfrak F^d_k$ given by

$$
T_k(e_{j_1}\odot\cdots\odot e_{j_k})=q_{j_1,\cdots j_k}
$$
extends to an isomorphism of vector spaces, where $\odot$ denotes the symmetric tensor product. It is straightforward to check that
\begin{equation}\label{equi} 
T_k(v_1\odot\cdots\odot v_k)=T_1(v_1)\cdots T_1(v_k).   
\end{equation}

Moreover,  $T_1(v)=T_1(\sum \langle e_j,v\rangle e_j)=\sum \langle e_j,v\rangle z_j$, thus 
$$
[T_1(v)](z)=\langle \overline{v},z\rangle.
$$

The following result is apparently well known, but we could not find it stated, nor proved, in the literature. It will be the main tool to prove theorems  \ref{spec} and \ref{smu}.

\begin{proposition}\label{peta}
Let $\eta_k(g):=T_k^{-1}\nu_k(g) T_k:S^k(\C^d)\to S^k(\C^d)$. Then 
\begin{equation}\label{etak}
 \eta_k(g)(v_1\odot v_2\odot\cdots\odot v_k)=\det(g)^{-1/2}(\overline{g} v_1)\odot (\overline{g} v_2)\odot\cdots\odot (\overline{g} v_k),
\end{equation}
for any $g\in \mathbb{U}(d)$ and $v_1,v_2,\cdots v_k\in\C^d$.
\end{proposition}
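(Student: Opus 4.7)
The plan is to verify the identity directly on pure symmetric tensors $v_1\odot\cdots\odot v_k$, since these span $S^k(\mathbb{C}^d)$ and both sides of \eqref{etak} are multilinear and symmetric in the arguments $v_1,\dots,v_k$ (the right-hand side manifestly so, the left-hand side because it factors through $S^k(\mathbb{C}^d)$).

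First I would unpack $\nu_k(g)T_k(v_1\odot\cdots\odot v_k)$ using the two explicit descriptions available: the product rule \eqref{equi} and the formula $[T_1(v)](z)=\langle z,\overline{v}\rangle$. Writing the image as the polynomial $z\mapsto \langle z,\overline{v_1}\rangle\cdots\langle z,\overline{v_k}\rangle$ and then applying \eqref{nu} gives
\[
[\nu_k(g)T_k(v_1\odot\cdots\odot v_k)](z)=\det(g)^{-1/2}\prod_{j=1}^k\langle g^{-1}z,\overline{v_j}\rangle .
\]
The key algebraic step is then to move $g^{-1}$ to the other side of each inner product. Since $g\in\mathbb U(d)$, one has $(g^{-1})^*=g$, so $\langle g^{-1}z,\overline{v_j}\rangle=\langle z,g\overline{v_j}\rangle$, and using the elementary identity $g\overline{w}=\overline{\overline{g}\,w}$ (valid for $w\in\mathbb C^d$ and $\overline{g}$ the entrywise complex conjugate), this becomes $\langle z,\overline{\overline{g}v_j}\rangle=[T_1(\overline{g}v_j)](z)$.

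Reassembling the product through \eqref{equi}, the right-hand side rewrites as
\[
\det(g)^{-1/2}\,T_1(\overline{g}v_1)(z)\cdots T_1(\overline{g}v_k)(z)
=\det(g)^{-1/2}\,[T_k((\overline{g}v_1)\odot\cdots\odot(\overline{g}v_k))](z).
\]
Applying $T_k^{-1}$ then yields \eqref{etak}. Finally, I would remark that the formula extends from pure symmetric tensors to all of $S^k(\mathbb C^d)$ by linearity, thereby defining $\eta_k(g)$ intrinsically on the symmetric power.

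I do not expect any genuine obstacle here, since everything reduces to an explicit computation. The only point requiring a bit of care is the bookkeeping of complex conjugates: the Bargmann-type realisation uses the antiholomorphic pairing $\langle z,\overline v\rangle$, so the unitary adjoint relation must be applied in the correct slot, which is why $g$ acts on $S^k(\mathbb C^d)$ through $\overline g$ rather than through $g$ itself. Once this identification is made explicit, the rest is purely formal.
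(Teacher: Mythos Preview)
Your proof is correct and follows essentially the same route as the paper: the paper verifies the identity for $k=1$ via the chain $[T_1(\overline{g}v)](z)=\langle z,\overline{\overline{g}v}\rangle=\langle z,g\overline v\rangle=\langle g^{-1}z,\overline v\rangle=\det(g)^{1/2}[\nu_1(g)T_1(v)](z)$ and then invokes \eqref{equi} for general $k$, which is exactly your computation with the factors handled one at a time rather than all at once.
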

\begin{proof}
Clearly we have that 
$$
[T_1(\overline{g}v)](z)=\langle \overline{\overline{g}v},z\rangle=\langle g\overline{v},z\rangle =\langle\overline{v}, g^{-1}z\rangle=\det(g)^{1/2}[\nu_1(g) T_1(v)](z)
$$
and this equivalent to equation \eqref{etak} for $k=1$. The general case follows from equation \eqref{equi}. 
\end{proof}

\begin{proof}[Proof of Theorem \ref{smu}]
Let $v_1,\cdots v_d$ be a basis of eigenectors of $g$ corresponding to $\theta_1,\cdots,\theta_d$. Thus, 
\begin{align}\label{uevec}
  \eta_k(g)(\overline{v_{j_1}}\odot\cdots\odot \overline{v_{j_k}})&=\det(g)^{-1/2}(\overline{g v_{j_1}})\odot\cdots\odot (\overline{g v_{j_k}})\\
  &=\det(g)^{-1/2}\cdot\overline{\theta}_{j_1}\cdots\overline{\theta}_{j_k}(\overline{v_{j_1}}\odot\cdots\odot \overline{v_{j_k}}) 
\end{align}

This together with equation \eqref{equi} shows that the polynomial $q$ given in theorem \ref{smu} is an eigenvector of $\nu(g)$ with eigenvalue $\theta=\det(g)^{-1/2}\cdot\overline{\theta_1}^{n_1}\cdots \overline{\theta_d}^{n_d}$, and therefore $\hat B^* q$ is an eigenvector of $\mu(g)$ with the same eigenvalue.

Since the collection of vectors $\{\overline{v_{j_1}}\odot\cdots\odot \overline{v_{j_k}}\}$ with $j_i\in\{1,\cdots, d\}$ forms a basis of $S^k(\C^d)$, we have that
$$
\sigma(\mu_k(g))=\sigma(\eta_k(g))= \det(g)^{-1/2}\{\overline{\theta}_{j_1}\cdots\overline{\theta}_{j_k}\mid j_i\in\{1,\cdots, d\}\}. 
$$
Hence, the decomposition of $\mu$ given by \eqref{dmu} implies that  
$$
\det(g)^{-1/2}\cdot\{\overline{\theta}_1^{n_1}\cdots\overline{\theta}_d^{n_d}\mid (n_1,\cdots, n_d)\in\mathbb N_0^d\}\subseteq \sigma_p(\mu(g)).
$$ 
Conversely, if $\mu(g)\varphi=\rho\varphi$, for some $\varphi\neq 0$ and $\rho\in\mathbb S^1$, then $\mu_k(g)P_k\varphi=\rho P_k\varphi$ for every $k\in\mathbb N$. Since $\varphi\neq 0$, $P_k\varphi\neq 0$ for some $k\in\mathbb N$, thus $\rho\in \sigma(\mu_k(g))$.

For the second claim, it is well known that every irrational rotation generates a dense orbit in $\mathbb S^1$. The converse will follow from proving the last claim of our result (concerning the case of rational rotations eigenvalues). In that case, let $Z(\theta)$ be the additive subgroup of $\R$ generated by $\frac{p_1}{q_1},\cdots, \frac{p_d}{q_d}$. Then multiplying by $q$ maps $Z(\theta)$ into the subgroup of $\mathbb Z$ generated by $\frac{q|p_1|}{q_1},\cdots, \frac{q|p_d|}{q_d}$. It is well known that such subgroup is generated by the greatest common divisor of $\frac{q|p_1|}{q_1},\cdots, \frac{q|p_d|}{q_d}$. In particular, $Z(\theta)=\frac{p}{q}\mathbb Z$ and this implies our result. 

\end{proof}
\begin{Remark}
{\rm
When $g=\mathfrak J$ is the canonical symplectic matrix, we have that $\theta_1=\cdots,=\theta_d=-i$ and $\mu(g)=i^{n/2}\mathcal F^{-1}$, where $\mathcal F$ is the Fourier transform (see \cite[eq. (4.26) and Proposition 4.46]{Fol}).  Theorem \ref{smu} implies the well known identity $\sigma(\mu(g))=\{1,-1,i,-i\}$.  
}    
\end{Remark}

We will use the same technique to compute the spectrum of $-id\mu_k(A)=\Op_k(p_A)=H_A|_{\H_k}$, for each $A\in \mathfrak{u}(d)$ and this will lead us to prove theorem \ref{spec}.

\begin{proof}[Proof of theorem \ref{spec}]
Taking $g=e^{tA}$ in equation \eqref{etak} and differentiating at $t=0$ we obtain that
\begin{align*}
d\eta_k(A)(v_1\odot v_2\odot\cdots\odot v_k)&=-\frac{1}{2}\text{tr}(A)(v_1\odot v_2\odot\cdots\odot v_k)
+(\overline{A}v_1)\odot v_2\odot\cdots\odot v_k\\
&+v_1\odot (\overline{A}v_2)\odot\cdots\odot v_k+\cdots +v_1\odot v_2\odot\cdots\odot(\overline{A}v_k) 
\end{align*}
Let $\{v_j\}$ be a basis of $\C^d$ such that $Av_j=is_jv_j$. Then 
\begin{align}
&d\eta_k(A)(\overline{v_{j_1}}\odot\cdots\odot\overline{v_{j_k}})\nonumber\\
&=-\frac{1}{2}\text{tr}(A)(\overline{v_{j_1}}\odot\cdots\odot\overline{v_{j_k}})
+(\overline{Av_{j_1}})\odot v_2\odot\cdots\odot \overline{v_{j_k}}
+\cdots +\overline{v_{j_1}}\odot\cdots\odot (\overline{Av_{j_k}})\nonumber \\
&= \left(-\frac{1}{2}\text{tr}(A)-i\sum_{l=1}^k s_{j_l}\right)(\overline{v_{j_1}}\odot\cdots\odot\overline{v_{j_k}}).\label{evec}\end{align}
Since the collection of vectors $\{\overline{v_{j_1}}\odot\cdots\odot \overline{v_{j_k}}\}$ with $j_i\in\{1,\cdots, d\}$ forms a basis of $S^k(\C^d)$, we have that
$$
\sigma(H_A|_{\H_k})=\left\{-\sum_{l=1}^k s_{j_l}\mid j_l\in\{1,\cdots, d\} \right\}+\frac{i}{2}\text{tr}(A).
$$

Thus theorem \ref{PCM} implies the expression of $\sigma_p(H_A)$ and $\sigma(H_A)$ given in the first part of theorem \ref{spec}. Moreover, equation \eqref{equi} shows that the polynomial $q$ given in theorem \ref{spec} is an eigenvector of $\hat B H_A\hat B^*$ with eigenvalue $\lb=-\sum s_j n_j+\frac{i}{2}\text{tr}(A)$, and therefore $\hat B^* q$ is an eigenvector of $H_A$ with the same eigenvalue. Furthermore, if $\varphi$ is an eigenvector, $P_k\varphi$ vanishes or it is an eigenvector of $H_A|_{\H_k}$, and this implies the last claim of the first part of theorem \ref{spec}.

 Let $Z(s)$ be the subgroup of $\R$ generated by the monoid $L(s)$ (i.e $Z(s)=L(s)-L(s)$). Alternatively, $Z(s)$ is the additive subgroup of $\R$ generated by $s_1,\cdots, s_d$. It is well known that every subgroup of $\R$ is either dense in $\R$ or it is of the form $x\mathbb Z$, for some $x\in\R$. If a) holds, since $L(s)\subseteq x\mathbb Z$, we have that there are $p_1,\cdots p_d\in \mathbb Z$ such that $s_j=p_jx$. Conversely, if $s_j=p_jx$ with $p_1,\cdots p_d\in \mathbb Z$ then $Z(s)=px\mathbb Z$, where  $p$ is the greatest common divisor of $|p_1|,\cdots,|p_d|$. This shows the equivalence between statements a) and b), and clearly a) implies c). Let us show that c) implies a). Let $\lambda\in L(s)$ and $r>0$ as in c). Let $D=(\lambda-r,\lb+r)-(\lambda-r,\lb+r)$. Then $D\backslash\{0\}$ is a nonempty open set and we claim that $D\backslash\{0\}\cap (L(s)-L(s))=\emptyset$. Let $x_0-y_0\in D\backslash\{0\}\cap (L(s)-L(s))$ with $x_0,y_0\in (\lambda-r,\lb+r)$ and $x_0\neq y_0$. Then there is $x,y\in L(s)$ such that $x_0-y_0=x-y$. Thus $x_0+y=x+y_0$, but the l.h.s. belongs $(\lambda +y-r,\lb+y+r)$ and the r.h.s. belongs $(\lambda +x-r,\lb+x+r)$. Hence $(\lambda +y-r,\lb+y+r)\cap (\lambda +x-r,\lb+x+r)\neq\emptyset$, which is a contradiction. Thus, $Z(s)$ is not dense and this implies a). The last claim follows directly from a).

\end{proof}
\begin{Remark}
 {\rm
\begin{enumerate}
    \item[i)]  If $s_1, \cdots s_d\in \mathbb Q$, then statement b) in theorem \ref{spec} holds. Indeed, if $s_j=\frac{p_j}{q_j}$ for each $j\in\{1,\cdots, d\}$, with $p_j\in\mathbb Z$ and $q_j\in\mathbb N$, then $Z(s)=\frac{p}{q}\mathbb Z$, where $q$ is the least common multiple of the denominators $q_1,\cdots, q_d$ and $p$ is the greatest common divisor of $\frac{q|p_1|}{q_1},\cdots, \frac{q|p_d|}{q_d}$ (see the argument in the proof of theorem \ref{smu}).
\item[ii)] When $A=\mathfrak J$, we have that $s_j=-1$ for every $1\leq j\leq d$, $H_A=H_0$ is the Harmonic Oscillator and our result asserts the well known identity $\sigma(H_0)=\mathbb N+\frac{d}{2}$. For each $i_0<j_0$, if $A=\big(\begin{smallmatrix} 
B & 0\\
0 & B
\end{smallmatrix}\big)$ with $b_{ij}=\delta_{i_0 i}\delta_{j_0 j}-\delta_{i_0 j}\delta_{j_0 i}$, then the corresponding eigenvalues are $\{0,-i,i\}$, $H_A=x_{j_0}\frac{\partial}{\partial x_{i_0}}-x_{i_0}\frac{\partial}{\partial x_{j_0}}$ is an angular momentum operator and theorem \ref{smu} asserts the well known identity $\sigma(H_A)=\mathbb Z$. 

\item[iii)] Theorem \ref{spec} implies  that the subgroup $Z(s)=L(s)-L(s)$ is dense in $\mathbb R$ if and only if $0\in \overline{Z(s)}$. Indeed,  if there are two sequences $(x_n),(y_n)$ in $N(s)$ such that $\lim( x_n-y_n)=0$, then $N(s)$ is not uniformly topologically discrete.
\end{enumerate}
  }   
\end{Remark}

\section{Multiplicity, spectral distribution and Weyl's law}\label{MyW}
In this section, we will  look for conditions to ensure that the spectrum of the operators studied in the previous section is discrete.

As we mentioned in the introduction, the operator $\mu(g)$ does not have discrete spectrum, for any $g\in \mathbb{U}(d)$. Indeed, if this was not the case then the eigenvalues of $g$ must be rational rotations, otherwise theorem \ref{smu} would imply that the eigenvalues of $\mu(g)$ would not be isolated.  Assume that  the eigenvalues of $g\in \mathbb{U}(d)$ are $\theta_1=\text{exp}(2\pi i\frac{p_1}{q_1}),\cdots, \theta_d=\text{exp}(2\pi i\frac{p_d}{q_d})$, with $p_1,\cdots p_d\in \mathbb Z$ and $q_1,\cdots q_d\in\mathbb N$. Equation \eqref{uevec} implies that for each $(n_1,\cdots, n_d)\in\mathbb N_0^d$ there are at least $\binom{M_1+n_1-1}{n_1}\cdots\binom{M_d+n_d-1}{n_d}$ linearly independent eigenvectors of $\mu(g)$ with eigenvalue $\lb=\overline{\theta_1}^{n_1}\cdots \overline{\theta_d}^{n_d}$, where $M_j$ is the multiplicity of the eigenvalue $\theta_j$. Fix $(n_1,\cdots, n_d)\in\mathbb N_0^d$. Then we have that $\overline{\theta_1}^{n_1}\cdots \overline{\theta_j}^{n_j}\cdots\overline{\theta_d}^{n_d}=\overline{\theta_1}^{n_1}\cdots \overline{\theta_j}^{n_j+tq_j}\cdots\overline{\theta_d}^{n_d}$, for every $t\in\mathbb N$. Therefore, the multiplicity of every eigenvalue of $\mu(g)$ is infinite.

Let us calculate the multiplicity of the eigenvalues of $H_A$. 
\begin{proof}[Proof of Proposition \ref{mul}]
For each $\lambda\in \sigma_p(H_A)$, let 

$$
M_\lb=\left\{(n_1,n_2,\cdots, n_d)\in \mathbb N^d_0\;\left|\; -\sum n_js_j+\frac{i}{2}\text{tr}(A)=\lb\right\}\right.
$$
We claim that $m_A(\lb)=\#( M_\lb)$. Indeed, since $H_A$ strongly commutes with the Harmonic Oscillator,  the eigenspace of $H_A$ corresponding to $\lb$ is the orthogonal sum over $k$ of the eigenspaces of $H_A|_{\H_k}$ corresponding to $\lb$. Furthermore, since each $T_k$ is an isomorphism, if we apply $T_k$ to the eigenvectors corresponding to $\lb$ given by equation \eqref{evec} we obtain a basis of the eigenspace of   $BH_AB^*|_{\mathfrak{F}^d_k}$ corresponding to $\lb$, and this implies our claim. In particular, $\lambda$ has finite multiplicity if and only if $M_\lb$ is finite.

Let us assume that $s_j=p_jx$, with $p_j\in\mathbb Z$ and suppose that there are $j_0$ and $j_1$ such that $s_{j_0}<0<s_{j_1}$. Without loss of generality, we can assume that $p_{j_0}<0<p_{j_1}$. Fix $n\in M_\lb$. Clearly, if we define $m_{j_0}=n_{j_0}+tp_{j_1}$, $m_{j_1}=n_{j_1}-tp_{j_0}$ and $m_j=n_j$ for $j\neq j_{0}, j_1$, then $m\in M_\lb$ for every $t\in\mathbb N$. Therefore, if $s_1,\cdots,s_d$ do not have the same sign, then $M_\lb$ is an infinite set. As we mentioned in the introduction, when $p_1,\cdots p_d$ have the same sign,  the expression of the multiplicity function $m_A$ follows from \cite[Theorem 2]{Wr} or references therein.
\end{proof}

A function admitting the type of expression described in proposition \ref{mul} for the multiplicity function $m_A$ is called a semi (or quasi)-polynomial of degree $d-1$ relative to modulus $d!$ (see \cite{Wr}). Notice that in the case $A=\mathfrak J$ this property is trivially satisfied; indeed, if $\lb=k+d/2$ then $m(\lb)=\binom{d+k-1}{k}$.  

For simplicity, in what follows we will assume that $s_j=p_j x <0$, and we will choose $x>0$.  

Before going into the proof of theorem \ref{dis}, let us recall the definition of an Ehrhart polynomial. Let $\mathcal{P}$ be a polyhedra having vertexes with integer coordinates and define 
\begin{equation}\label{Eh}
i(\mathcal P,k):=\#\left(k\mathcal P\bigcap\mathbb Z^d\right).
\end{equation}

E. Ehrhart showed that the map $i(\mathcal P,k)$ is a polynomial in $k$ (for instance, see \cite{Eh,St, Bri}), and this was the starting point of an important theory in combinatorics.

\begin{proof}[Proof of Theorem \ref{dis}]
From the proof of Proposition \ref{mul}, it is clear that the counting of eigenvalues function is given by 
$$
N_A(r)=\#\{\lb\in\sigma(H_A)\mid \lb\leq r\}=\#\left\{n\in\mathbb N^d_0\;\left|\; -\sum n_js_j+\frac{i}{2}\text{tr}(A)\leq r\right\}\right.
$$
Let $q$ be the minimal common multiple of $-p_1,-p_2,\cdots -p_d$ and let $\mathcal P$ be the polyhedra defined in the statement of Theorem \ref{dis}. Then the  vertexes of $\mathcal P$ are $V=\{-qp_j^{-1}\mid 1\leq j\leq d\}\cup\{0\}$. It is straightforward to show that
$$
i(\mathcal P,k)=N_A\left(kqx+\frac{i}{2}\text{tr}(A)\right).
$$
Taking $k=[q^{-1}x^{-1}(r-i\text{tr}(A)/2)]$, we obtain that $kqx+\frac{i}{2}\text{tr}(A)\leq r$ and the inequality in the statement of Theorem \ref{dis} follows. The value of the coefficients $c_d,c_{d-1}$ and $c_0$ is a well known fact from the theory of Ehrhart polynomials  (for instance, see subsection 1.3 in \cite{Bri}).
\end{proof}

Let us put back Planck's constant dependence on the operators studied before. More precisely, let us consider  the operators  $\mu_\hslash(g)$ and $\Op^\hslash(p_A)$ defined in subsection \ref{WQ}, for $g\in \mathbb{U}(d)$ and $A\in\mathfrak u(d)$. Since $\mu_\hslash(g)$ is unitary equivalent with $\mu(g)$ (see identity \eqref{equi-mu}), the spectrum of $\mu_\hslash(g)$ is described by Theorem \ref{smu}. Since $-id\mu_\hslash(A)$ is also unitary equivalent with $-id\mu(A)$, Proposition \ref{hb-H_A} and Theorem \ref{spec} imply the following result.

\begin{proposition}
For each $A\in\mathfrak u(d)$ and $\hslash >0$, we have that $\Op^\hslash(p_A)$ is unitary equivalent with $\hslash\Op(p_A)$. In particular, 
$$
\sigma_p[\Op^\hslash(p_A)]=\hslash L(s)+i\hslash\frac{\text{tr}(A)}{2}.
$$
\end{proposition}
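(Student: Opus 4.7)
The plan is to string together Proposition \ref{hb-H_A} (applied at scale $\hslash$ and at scale $1$) with the intertwining identity \eqref{equi-dmu} so as to exhibit an explicit unitary conjugating $\Op^\hslash(p_A)$ into $\hslash\Op(p_A)$, and then read off the spectrum from Theorem \ref{spec}.

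First, I would invoke Proposition \ref{hb-H_A} to write $\Op^\hslash(p_A) = -i\hslash\, d\mu_\hslash(A)$ and, at $\hslash=1$, $\Op(p_A) = -i\, d\mu(A) = H_A$. Next, I would substitute the intertwining relation \eqref{equi-dmu}, namely $d\mu_\hslash(A) = \mu(D_\hslash)\, d\mu(A)\, \mu(D_\hslash)^*$, into the first expression. Pulling the factor $\hslash$ outside and recognizing $-i\, d\mu(A) = \Op(p_A)$ gives
$$
\Op^\hslash(p_A) \;=\; \hslash\, \mu(D_\hslash)\,\Op(p_A)\,\mu(D_\hslash)^{*} \;=\; \hslash\,\mathcal D_\hslash\,\Op(p_A)\,\mathcal D_\hslash^{*},
$$
where $\mathcal D_\hslash = \mu(D_\hslash)$ is the unitary dilation operator recalled after equation \eqref{equi-mu}. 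Since $\mathcal D_\hslash$ is unitary on $L^2(\R^d)$, this exhibits $\Op^\hslash(p_A)$ as unitarily equivalent to $\hslash\,\Op(p_A)$, which is the first assertion.

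For the spectral statement, unitary equivalence preserves the point spectrum, and multiplying a self-adjoint operator by the positive scalar $\hslash$ scales its point spectrum by $\hslash$. Hence
$$
\sigma_p\!\left[\Op^\hslash(p_A)\right] \;=\; \hslash\,\sigma_p\!\left[\Op(p_A)\right] \;=\; \hslash\,\sigma_p(H_A),
$$
and Theorem \ref{spec} identifies $\sigma_p(H_A) = L(s) + \tfrac{i}{2}\mathrm{tr}(A)$, yielding the claimed formula.

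I do not expect any genuine obstacle here: the statement is essentially a bookkeeping corollary of results already proved, the only delicate point being to keep track of where the factor of $\hslash$ appears when passing between Weyl quantization at $\hslash$ and the infinitesimal metaplectic representation. All the needed identities are cited from Section \ref{Not}, so the argument reduces to a short algebraic manipulation followed by a direct appeal to Theorem \ref{spec}.
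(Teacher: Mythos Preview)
Your proposal is correct and follows exactly the route the paper takes: the paper states that the proposition follows from the unitary equivalence \eqref{equi-dmu} together with Proposition~\ref{hb-H_A} and Theorem~\ref{spec}, and you have simply written out that chain of implications in detail. There is nothing to add or correct.
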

\begin{Remark}
    {\rm The spectral analysis of the operator $H_A$ given in Theorem \ref{dis} and Proposition \ref{mul} remains exactly the same for the operator $H^\hslash_A:=\Op^\hslash(p_A)$. 
    }
\end{Remark}
Finally, let us prove the Weyl's law claimed in Theorem \ref{WL}.

\begin{Lemma}\label{PE}
Let $f:\R^d\to\R$ and $g:\R^{2d}\to\R$ given by $f(x)=\sum |s_j|x_j$ and $g(x,\xi)=2^{-1}\sum |s_j|(x_j^2+\xi_j^2)$ respectively. For each $t>0$, define $\mathcal P(t)=\{x\in\R^d\mid x\geq 0, f(x)\leq t\}$, $F(t)=\{x\in\R^d\mid x\geq 0, f(x)= t\}$ and $\mathcal E(t)=\{(x,\xi)\in\R^{2d}\mid g(x,\xi)\leq t\}$.  Then,
$$
|\mathcal P(t)|=(2\pi)^{-d}|\mathcal E(t)|=\frac{t^d}{d! |s_1|\cdot|s_2|\cdots|s_d|},
$$
\begin{equation}\label{face}
|F(t)|=|F(1)|t^{d-1}=\frac{\|s\|t^{d-1}}{(d-1)! s_1\cdots s_d}
\end{equation}
and
$$
|F(t)|=(2\pi)^{-d}\|s\|\int_{\partial\mathcal E(t)}\|\nabla g \|^{-1}d\mu_t,
$$
where $\mu_t$ is the measure corresponding to the canonical volume form on  $\partial\mathcal E(t)$.
\end{Lemma}
\begin{proof}
The first identity regarding the volume of the polyhedra $\mathcal P(t)$ and the ellipsoids $\mathcal E(t)$ are well known (and trivial). Clearly $\nabla f(x)=(s_1,\cdots ,s_d)$. Then coarea formula (for instance, see theorem 3.2.12 in \cite{Fe}) implies that
$$
|\mathcal P(t)|=\int_0^t \left(\int_{F(r)}\|\nabla f\|^{-1} d\eta_r\right ) dr=\|s\|^{-1}\int_0^t |F(r)| dr,
$$
where $\eta_r$ is the measure corresponding to the canonical volume form on the $(d-1)$-face $F(r)$. Derivating the previous equality with respect to $t$ leads to equation \eqref{face}.  Similarly,  we have that
$$
|\mathcal E(t)|=\int_0^t \left(\int_{\partial\mathcal E(r)}\|\nabla g\|^{-1} d\mu_r\right ) dr
$$
and derivating with respect to $t$ we obtain the second identity.
\end{proof}
\begin{proof}[Proof of Theorem \ref{WL}]
The inequality established in Theorem \ref{dis} implies that in order to obtain the asymptotic growth claimed in Theorem \ref{WL}, we can assume that $(q x)^{-1}(\frac{r}{\hslash}-\frac{i}{2}\text{tr}(A))$ belongs to $\mathbb{N}$. Then,

\begin{align*}\label{hdis}
N^\hslash_A(r)=& \sum_{j=1}^d c_j\left[(q x)^{-1}(\frac{r}{\hslash}-\frac{i}{2}\text{tr}(A))\right]^j \\
=&c_d ( q x)^{-d}(\hslash^{-1}r)^d+(c_{d-1} ( q x)^{-d+1}-\frac{i}{2}dc_d ( q x)^{-d}\text{tr}(A))(\hslash^{-1}r)^{d-1}\\
&+O\left((\hslash^{-1}r)^{d-2}\right)
\end{align*}

Since $c_d=|\mathcal P(qx)|$, $i\text{tr}(A)=\sum_{j=1}^d|s_j|$ and 

$$
c_{d-1}=\frac{1}{2}|\partial\mathcal P(qx)|=\frac{(qx)^{d-1}}{2(d-1)!}\sum_{j=1}^d\frac{|s_j|}{|s_1|\cdot|s_2|\cdots|s_d|}+\frac{1}{2}|F(qx)|,
$$
the previous lemma implies that 
\begin{align*}
N^\hslash_A(r)&=(2\pi\hslash)^{-d}|\mathcal E(r)|+\frac{\hslash^{-d+1}}{2}|F(r)|+O\left((\hslash^{-1}r)^{d-2}\right)\\
&=(2\pi)^{-d}\hslash^{-d}|\mathcal E(r)|+\frac{\|s\|}{2}(2\pi)^{-d}
\hslash^{-d+1}\int_{\partial\mathcal E(t)}\|\nabla g \|^{-1}d\mu_t+O\left((\hslash^{-1}r)^{d-2}\right)
\end{align*}

Since $A\in\mathfrak{u}(d)$, there is $U\in\mathbb{U}(d)$ such that $UAU^*=D$, where $D$ is the diagonal matrix with entrances $is_1,is_2,\cdots is_d$. Recall that, under the identification of $\R^{2d}$ with $\C^d$, the Lie group $\mathbb U(d)$ corresponds with  $\mathbb{O}(2d)\cap Sp(d)$. Thus,
$$
p_A(w)=-\frac{1}{2}\langle w,A\mathfrak{J}w\rangle=-\frac{1}{2}\langle w,U^*DU\mathfrak{J}w\rangle=-\frac{1}{2}\langle Uw,\mathfrak{J}D U w\rangle=g(Uw).
$$
Since $U$ preserves volume, we have that
$$
|\mathcal E(t)|=|\{(x,\xi)\in\R^{2d}\mid p_A(x,\xi)\leq t\}|.
$$
Applying coarea formula, we obtain that
$$
\int_0^t\left(\int_{\partial\mathcal E(r)}\|\nabla g\|^{-1}d\mu_r\right) dr = \int_0^t\left(\int_{\{(x,\xi)\mid p_A(x,\xi)=r\}}\|\nabla p_A\|^{-1}d\mu_r^{A}\right) dr,
$$ 
where $\mu_t^{A}$ is the measure corresponding to the canonical volume form on  $\{(x,\xi)\mid p_A(x,\xi)=r\}$. Derivating both sides of the previous identity, we have that 
$$
\int_{\partial\mathcal E(r)}\|\nabla g\|^{-1} = \int_{\{(x,\xi)\mid p_A(x,\xi)=r\}}\|\nabla p_A\|^{-1}
$$ 
and this finishes the proof.
\end{proof}


\end{document}